\title[Sums of squares in $S$-integers]{Sums of squares in $S$-integers}
\author{V\' \i t\v ezslav Kala}  
\address{Charles University, Faculty of Mathematics and Physics, Department of Algebra, Sokolov\-sk\' a 83, 18600 Praha~8, Czech Republic}
\email{kala@karlin.mff.cuni.cz}  
\author{Pavlo Yatsyna}
\address{Charles University, Faculty of Mathematics and Physics, Department of Algebra, Sokolov\-sk\' a 83, 18600 Praha~8, Czech Republic}
\email{yatsyna@karlin.mff.cuni.cz}  
\thanks{The authors were supported by projects PRIMUS/20/SCI/002 (V.K., P.Y.) and UNCE/SCI/022 (V.K.) from Charles University} 
\keywords{number field, totally real, $S$-integers, sum of squares, Pythagoras number}
\subjclass[2010]{11E25,11E12,11R11}
\theoremstyle{plain}
\newtheorem{thm}{Theorem}
\newtheorem*{thm*}{Theorem}
\newtheorem{lemma}[thm]{Lemma}
\newtheorem{cor}[thm]{Corollary}
\newenvironment{customthm}[1]
{\innercustomthm}
{\endinnercustomthm}
\newenvironment{customcor}[1]
{\innercustomcor}
{\endinnercustomcor}
\theoremstyle{definition}
\newcommand{\co}{\mathcal O}
\newcommand{\Tr}{\mathrm{Tr}}
\begin{document}
 
\begin{abstract}   
      In totally real number fields, we characterize the rings of $S$-integers (obtained by inverting a rational integer $m$) such that all their totally positive elements are represented as a sum of squares. We further obtain partial answers to the question: when are all the totally positive algebraic integers that are divisible by $m$ represented as a sum of squares?
\end{abstract} 
\maketitle

\section{Introduction}
In 1921, Siegel \cite{S1} proved Hilbert's conjecture that
\textit{in any number field $K$, every totally positive element can be represented as a sum of four squares of elements of $K$}.

The study of similar questions for rings of integers in number fields turned out to be much subtler. Of course, already in 1770 Lagrange proved that every positive integer is a sum of four squares.
In the case of totally real number fields, Maa\ss\ and Siegel \cite{Ma,S2} showed that the only other case when every totally positive integer is a sum of squares (in the ring of integers) is $\mathbb Q(\sqrt 5)$. In the non-totally real case, the necessary and sufficient condition for this to happen is that 2 is unramified \cite{S2}. Overall, the non-totally real case is better understood, and so we do not consider it in this note.

These results were then generalized in two main directions: Let the \textit{Pythagoras number} of a ring $R$ be the smallest positive integer $r$ such that every sum of squares in $R$ is also a sum of at most $r$ squares. In 1980 Scharlau \cite{Sc2} showed that Pythagoras numbers of rings of integers in totally real number fields $K$ can be arbitrarily large.
He also formulated a question whether these Pythagoras numbers are bounded in the case of bounded degree $[K:\mathbb Q]$; 
the authors of this note recently answered Scharlau's question in the affirmative
\cite{KY}.
For more results and history see, e.g., the survey by Leep \cite{Le}.

The second rich source of generalizations are \textit{universal quadratic forms}, i.e., quadratic forms that represent all totally positive integers. There is a rich literature concerning them, so let us only briefly note that they exist over each totally real number field by the results of \cite{HKK}, but can require arbitrarily large numbers of variables \cite{BK,Ka,Ya}.

\medskip

In this short note, we will focus on the intermediate case between number fields and their rings of integers $\co$, namely, on the rings of $S$-integers $\co\left[\frac 1m\right]$. In such a setting,
Collinet \cite{C} proved in 2016 that in $\co\left[\frac 12\right]$, every totally positive element is a sum of five squares.
We will first generalize his result to the case of arbitrary even $m$.

Throughout the paper,
$K$ will be a totally real number field of degree $d$ over $\mathbb Q$ with the ring of integers $\mathcal{O}$;
the subset of totally positive elements is denoted $\co^+$.

\begin{thm}\label{thm:1}	
	Let $m\in \mathbb{N}$, $m>1$. 
\begin{enumerate}[label=\alph*)]
	\item If $m$ is even or $2$ is unramified in $\mathcal{O}$, then every totally positive element of
	$\mathcal{O}\left[\frac 1m\right]$ is a sum of five squares in $\mathcal{O}\left[\frac 1m\right]$.
	\item If $m$ is odd and $2$ ramifies in $\mathcal{O}$, then there exist totally positive elements in
	$\mathcal{O}\left[\frac 1m\right]$ that are not sums of squares in $\mathcal{O}\left[\frac 1m\right]$.
\end{enumerate}
\end{thm}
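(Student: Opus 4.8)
My plan is to analyse the single form $q=x_1^2+\cdots+x_5^2$ over the Dedekind domain $R=\mathcal{O}[1/m]$, whose nonzero primes are exactly the primes $\mathfrak{p}$ of $\mathcal{O}$ with $\mathfrak{p}\nmid m$ and whose completion at such a $\mathfrak{p}$ is $\mathcal{O}_{\mathfrak{p}}$. The heart of (a) is a local--global principle: I claim that a totally positive $\alpha\in R$ is a sum of five squares in $R$ precisely when it is a sum of five squares in each $\mathcal{O}_{\mathfrak{p}}$ with $\mathfrak{p}\nmid m$. The nontrivial (sufficiency) direction comes from strong approximation for $\mathrm{Spin}(q)$: since $m>1$, the set $S$ of inverted places contains a finite prime $\mathfrak{q}\mid m$, and as $q$ has rank $5$ it is isotropic over $K_{\mathfrak{q}}$, because every quadratic form of rank $\geq 5$ over a non-archimedean local field is isotropic. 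Hence $\mathrm{Spin}(q)$ is noncompact at the place $\mathfrak{q}\in S$, strong approximation applies, the genus of $q$ over $R$ reduces to a single class, and representation everywhere locally upgrades to representation by $q$ over $R$; the archimedean places contribute only the total positivity of $\alpha$.

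It remains to check the local conditions at the primes $\mathfrak{p}\nmid m$. At an odd $\mathfrak{p}$ the ternary form $x_1^2+x_2^2+x_3^2$ already represents every element of $\mathcal{O}_{\mathfrak{p}}$, so five squares are more than enough. A dyadic $\mathfrak{p}\nmid m$ can occur only when $m$ is odd, and then the hypothesis of (a) forces $2$ to be unramified, so $\mathcal{O}_{\mathfrak{p}}$ is unramified over $\mathbb{Z}_2$; over such a ring a standard dyadic computation shows that five squares (indeed four) represent everything. This proves (a). I would stress that the fifth square is genuinely needed: its role is to make $q$ isotropic at \emph{every} non-archimedean place at once, so that strong approximation applies uniformly for all $m>1$. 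With only four squares the inverted prime need not be isotropic --- for $K=\mathbb{Q}$, $m=2$ the form $\langle 1,1,1,1\rangle$ is anisotropic over $\mathbb{Q}_2$ --- which is exactly why five squares appear already in Collinet's case.

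\textbf{Part (b).} Here $m$ is odd and $2$ ramifies, so I fix a dyadic prime $\mathfrak{p}$ of $\mathcal{O}$ with ramification index $e\geq 2$. Then $\mathfrak{p}\nmid m$, whence $R\subseteq \mathcal{O}_{\mathfrak{p}}$, and it suffices to exhibit a totally positive $\alpha\in\mathcal{O}$ that is not a sum of squares in $\mathcal{O}_{\mathfrak{p}}$, since any representation $\alpha=\sum\xi_i^2$ with $\xi_i\in R$ would localise to one in $\mathcal{O}_{\mathfrak{p}}$. The obstruction lives modulo $\mathfrak{p}^2$: because $e\geq 2$ we have $2\in\mathfrak{p}^2$, so $\mathcal{O}_{\mathfrak{p}}/\mathfrak{p}^2\cong\mathbb{F}_q[\varepsilon]/(\varepsilon^2)$ is of characteristic $2$, and there $(a+b\varepsilon)^2=a^2$; thus every square, and hence every sum of squares, lands in the subfield $\mathbb{F}_q$ and has vanishing $\varepsilon$-component. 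Now choose $\beta\in\mathcal{O}$ with $\beta\equiv 1+\pi\pmod{\mathfrak{p}^2}$ for a uniformizer $\pi$, so $\beta$ has nonzero $\varepsilon$-component, and set $\alpha=\beta+4n$ for large $n$. Since $4\in\mathfrak{p}^{2e}\subseteq\mathfrak{p}^2$ we keep $\alpha\equiv\beta\pmod{\mathfrak{p}^2}$, while $\sigma(\alpha)=\sigma(\beta)+4n>0$ at every real place once $n$ is large, so $\alpha$ is totally positive. By the previous paragraph $\alpha$ is not a sum of squares in $\mathcal{O}_{\mathfrak{p}}$, hence not in $R$, which proves (b).

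The routine parts are the dyadic local representability computations in (a) and the mod-$\mathfrak{p}^2$ obstruction in (b). The step I expect to require the most care is the local--global principle of the first paragraph: one must verify that the isotropy supplied by the inverted prime really forces class number one --- with no spinor-genus defect --- for the \emph{definite} lattice $\langle 1,1,1,1,1\rangle$ over $\mathcal{O}[1/m]$, so that everywhere-local representability genuinely yields a global representation. This $S$-arithmetic incarnation of strong approximation for the spin group is where the real work lies.
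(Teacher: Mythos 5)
Your proof is correct, but part (a) takes a genuinely different route from the paper. The paper never sets up a local--global principle over $R=\mathcal{O}\left[\frac 1m\right]$ itself: it writes a totally positive element as $\alpha/m^k$ with $k$ even, multiplies top and bottom by powers of $m$ until $N(\alpha)$ exceeds the threshold in the Hsia--Kitaoka--Kneser theorem (an asymptotic local--global principle for definite forms over $\mathcal{O}$), and then checks the same local conditions you do -- universality of unimodular ternary lattices at non-dyadic primes, and the criterion ``$\alpha$ is a square mod $2\mathcal{O}$'' at dyadic primes, which is arranged either by replacing $\alpha$ with $m^2\alpha$ when $m$ is even or by unramifiedness of $2$. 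You instead run Collinet's method for general $m$: exact local--global over the $S$-order via strong approximation for $\mathrm{Spin}$, using isotropy of the rank-$5$ form at an inverted finite place. Your route gives a clean equivalence with no norm threshold, at the cost of the $S$-arithmetic spinor-genus machinery; the paper's route is shorter given HKK as a black box (and the authors explicitly note that the genus-theoretic argument is an alternative). One refinement you should make when writing your version out: strong approximation yields one proper class per \emph{spinor genus}, not necessarily one class per genus, so the correct chain is ``represented by the genus $\Rightarrow$ represented by the spinor genus (no spinor exceptions in rank $\geq 4$) $\Rightarrow$ represented by $L$''; your closing sentence already flags this, but the phrase ``the genus reduces to a single class'' overstates what isotropy alone gives you. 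Part (b) is essentially the paper's argument in different clothing: the paper takes a totally positive $\alpha\in\mathfrak p_1\setminus\mathfrak p_1^2$ and derives $\alpha\in\mathfrak p_1^2$ from $\alpha\equiv\beta^2\pmod{\mathfrak p_1^{e_1}}$, while you take a unit $\equiv 1+\pi$; both exploit that squaring kills $\mathfrak p/\mathfrak p^2$ once $2\in\mathfrak p^2$, and both use that an odd $m$ is invertible at the dyadic prime.
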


One of our tools is the partial local-global principle for integral representations by quadratic forms 
due to Hsia, Kitaoka, and Kneser \cite{HKK} (although one could instead directly argue via the genus theory used in the proof of their result).
Our resulting Theorem \ref{thm:1} leaves open the question of what happens for arbitrary rings of $S$-integers, although perhaps similar methods may also be useful in the general case.

As a Corollary to Theorem \ref{thm:1} and its proof we get:
\begin{cor}\label{cor:1}
	There exists $m_0\in \mathbb{N}$ (depending on $K$) 
	such that for all $m\geq m_0$,
	every element in $2m\co^+$ is a sum of five squares of elements of $\co$.
\end{cor}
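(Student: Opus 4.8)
The plan is to represent each $\beta = 2m\alpha$ (with $\alpha \in \co^+$) as a sum of five squares in $\co$ by re-running the local-to-global mechanism from the proof of Theorem \ref{thm:1}, now directly over $\co$ instead of over $\co[\tfrac1m]$. Writing $q = x_1^2+\cdots+x_5^2$, the strategy splits into two independent ingredients: a size bound making the Hsia--Kitaoka--Kneser principle applicable, and local representability of $\beta$ by $q$ at every finite prime. The factor $m$, taken large, supplies the first; the explicit factor $2$ supplies the second at the dyadic primes; and all remaining primes are automatic.

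For the size bound, let $C = C(K)$ be the constant furnished by \cite{HKK} for the rank-$5$ form $q$, so that every totally positive $\beta \in \co$ of sufficiently large norm that is represented by $q$ over each completion is represented by $q$ over $\co$. I would set $m_0 := \lceil \tfrac12 C^{1/d}\rceil + 1$. Then for every $m \geq m_0$ and every $\alpha \in \co^+$ one has $N(\beta) = (2m)^d N(\alpha) \geq (2m)^d > C$, since $N(\alpha) \geq 1$ for a nonzero totally positive integer. It is crucial that the applicable hypothesis is a bound on the \emph{norm}: the individual conjugates $\sigma_i(2m\alpha)$ cannot be bounded below uniformly in $\alpha$ (take $\alpha$ a high power of a unit that is close to $0$ in one embedding), so a lower bound on $\min_i \sigma_i(\beta)$ would not follow from enlarging $m$, whereas the norm bound holds uniformly.

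It remains to verify that $\beta$ is represented by $q$ over $\co_\mathfrak p$ for every finite prime $\mathfrak p$. At a non-dyadic $\mathfrak p$ this is free: $q$ is unimodular of rank $5$ and isotropic over $K_\mathfrak p$, hence splits off a hyperbolic plane over $\co_\mathfrak p$ and so represents all of $\co_\mathfrak p$, whatever the valuation of $m$ at $\mathfrak p$; in particular it represents $\beta$, and the primes dividing $m$ require nothing special. The only genuine issue is at the dyadic primes $\mathfrak p \mid 2$, where, as the counterexamples of part b) show, $q$ fails to represent all of $\co_\mathfrak p$. Here I would use that $\beta = 2\cdot(m\alpha) \in 2\co_\mathfrak p$ and invoke the key local statement, extracted from the proof of Theorem \ref{thm:1}, that over each dyadic $\co_\mathfrak p$ the sum of five squares represents every element of $2\co_\mathfrak p$. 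This is the same dyadic phenomenon underlying parts a)--b): divisibility by $2$ (available in $\co[\tfrac1m]$ when $m$ is even, and built into $2m\co^+$ here) places the element inside the represented region, whereas its absence for odd $m$ with $2$ ramified produces the counterexamples of part b).

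The main obstacle is precisely this dyadic local claim in the ramified case: that $2\co_\mathfrak p$ is contained in the set of values of $x_1^2+\cdots+x_5^2$ over a ramified dyadic ring. I would establish it through the structure theory of quadratic forms over dyadic local fields --- solving $2u = \sum x_i^2$ (for $u$ a unit) modulo a high power of $\mathfrak p$ with some $x_i$ a unit, so that the gradient $2x_i$ has controlled valuation $v(2)$, and then lifting by Hensel's lemma, while the universality of $q$ over $K_\mathfrak p$ disposes of elements of higher valuation. Once this local input is in hand, the ingredients combine: $\beta$ is totally positive, of norm exceeding $C$, and locally represented everywhere, so the principle of \cite{HKK} yields $\beta = x_1^2 + \cdots + x_5^2$ with $x_i \in \co$, which is the assertion.
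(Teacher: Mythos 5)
Your proof is correct and follows essentially the same route as the paper: apply the Hsia--Kitaoka--Kneser principle to $2m\alpha$, with the norm bound supplied by taking $m\geq m_0\approx \tfrac12 C^{1/d}$ and the local conditions checked exactly as in the proof of Theorem \ref{thm:1} (archimedean and non-dyadic places automatic, dyadic places handled by $2\mid 2m\alpha$). The only superfluous part is your sketched Hensel-lemma re-derivation of the dyadic local claim: that claim is already Lemma \ref{lemma-1} applied to $2m\alpha\equiv 0^2\pmod{2\co}$ (via O'Meara's $Q(L)=Q(L)+2\mathfrak{s}L$ for unimodular dyadic lattices of rank $\geq 5$), so no new local analysis is required.
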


When $2$ is unramified in $K$, then one does not need to consider only even multiples and gets that each element in $m\co^+$ is a sum of five squares of elements of $\co$ if $m$ is sufficiently large.

The Corollary is a significant improvement on the result of Siegel \cite[Satz 9]{S1} which showed that there exists $m\in \mathbb{N}$, depending on $K$, such that any totally positive integer $\alpha \in K$ can be represented as $\sum \left( \dfrac{\beta_i}{m}\right) ^2$. Specifically, the number of squares required by Siegel depended on $\alpha$, i.e., $\dfrac{m^2\Tr(\alpha)}{d}$ squares were needed.

\medskip

We further focus in detail on the case of real quadratic fields $\mathbb Q(\sqrt D)$ in Section \ref{s:3}.

Scharlau \cite{Sc1} showed that in $\mathbb{Q}(\sqrt{2})$ and $\mathbb{Q}(\sqrt{3})$, all the elements of $\co$ that meet local criteria are represented as a sum of squares in $\co$. This implies that all elements of $2\mathcal{O}^+$ are represented as a sum of squares (of elements of $\co$), i.e., that one can take $m_0=1$. 
We prove the converse to his result:

\begin{thm}\label{thm:3}
	Let $K=\mathbb{Q}(\sqrt{D})$ with $D\geq 2$ squarefree.
	Then every element of $2\mathcal{O}^+$ is a sum of squares in $\co$ if and only if $D=2,3$, or $5$.	
\end{thm}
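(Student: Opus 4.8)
The plan is to prove both directions, handling the ``if'' part by citation and concentrating the real work on the converse via an explicit trace obstruction. For $D=2,3$ I would invoke Scharlau's theorem \cite{Sc1} (recalled above): in $\mathbb{Q}(\sqrt2)$ and $\mathbb{Q}(\sqrt3)$ every element of $\co$ meeting the local criteria is a sum of squares, and in particular every element of $2\co^+$ is, so one may take $m_0=1$. For $D=5$ I would invoke Maass--Siegel \cite{Ma,S2}: every totally positive element of $\co$ is already a sum of (three) squares, which a fortiori covers $2\co^+$. Thus the content is the converse.

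For the converse, assume $D\geq 2$ is squarefree with $D\neq 2,3,5$; I want a totally positive $\beta\in\co$ with $2\beta$ not a sum of squares. The tool is the trace form $x\mapsto \Tr(x^2)=\sum_\sigma\sigma(x)^2$, a positive definite integral binary quadratic form in the coordinates of $x$ with respect to an integral basis. Its minimal nonzero value is $2$, attained exactly at $x\in\{\pm1\}$, whereas its minimal value over $x\in\co\setminus\mathbb{Z}$ (elements whose ``irrational part'' is nonzero) equals $2D$ when $D\equiv 2,3\pmod4$ and $\tfrac{D+1}{2}$ when $D\equiv 1\pmod 4$. If $2\beta=\sum_i x_i^2$, then $\Tr(2\beta)=\sum_i\Tr(x_i^2)$, where each nonzero summand contributes at least $2$ and each summand with $x_i\notin\mathbb{Z}$ contributes at least the (large) value above. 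The idea is to choose $\beta$ totally positive so that the irrational part of $2\beta$ is nonzero, forcing some $x_i\notin\mathbb{Z}$, while $\Tr(2\beta)$ is too small to accommodate such a summand, or forces an arithmetic contradiction.

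Concretely I split into two cases. For $D\equiv 2,3\pmod 4$ (hence $D\geq 6$), write $\co=\mathbb{Z}[\sqrt D]$ and take $\beta=\lceil\sqrt D\,\rceil+\sqrt D$, which is totally positive since $\lceil\sqrt D\,\rceil>\sqrt D$. Comparing rational and $\sqrt D$ parts in $2\beta=\sum(a_i+b_i\sqrt D)^2$ gives $\sum(a_i^2+Db_i^2)=2\lceil\sqrt D\,\rceil$ and $\sum a_ib_i=1$; since $2\lceil\sqrt D\,\rceil\leq D$ for all $D\geq 6$, no summand can have $b_i\neq0$, contradicting $\sum a_ib_i=1$. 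For $D\equiv 1\pmod4$ (hence $D\geq 13$, as $D=9$ is not squarefree and $D=5$ is excluded), write $\co=\mathbb{Z}[\omega]$ with $\omega=\tfrac{1+\sqrt D}{2}$ and choose $\beta=p+\omega$ whose trace $t=\Tr(\beta)=2p+1$ is an odd integer lying in the interval $\bigl(\sqrt D,\tfrac{D+1}{2}\bigr)$; such an integer exists because this interval has length exceeding $2$, and then $\beta$ is totally positive (as $t>\sqrt D$) with $\Tr(2\beta)=2t<D+1$. At most one summand $x_i=a_i+b_i\omega$ can have $b_i\neq0$, since two such would contribute at least $D+1>\Tr(2\beta)$ to the trace; matching the $\omega$-coordinate then forces $b_i(2a_i+b_i)=2$, an even number, so $b_i$ must be even and nonzero, whence $\Tr(x_i^2)\geq 2D>D+1$ --- a contradiction either way.

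The crux is the trace bookkeeping: verifying uniformly in $D$ that the chosen $\beta$ is totally positive while $\Tr(2\beta)$ undershoots the minimal irrational trace value, i.e.\ the inequalities $2\lceil\sqrt D\,\rceil\leq D$ and $2t<D+1$, together with the parity step in the case $D\equiv1\pmod4$; I expect this to be the main point to get right, and the small boundary values $D=6,7$ and $D=13$ I would confirm by the same direct computation. Finally, I would note as a consistency check that both constructions genuinely break down for $D=2,3,5$ (the relevant inequalities and the interval $\bigl(\sqrt D,\tfrac{D+1}{2}\bigr)$ fail there), which matches the ``if'' direction and shows the result is sharp.
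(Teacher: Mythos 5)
Your proof is correct, and it attacks the theorem through the same critical element as the paper: for $D\equiv 2,3\pmod 4$ your $\beta=\lceil\sqrt D\,\rceil+\sqrt D$ is exactly the paper's $\alpha=(\lfloor\sqrt D\rfloor+1)+\sqrt D$, and for $D\equiv 1\pmod 4$ the paper's $\lfloor\omega_D\rfloor+\omega_D$ is the smallest member of your family $p+\omega$ with $\Tr(p+\omega)\in\bigl(\sqrt D,\tfrac{D+1}{2}\bigr)$. Where you differ is in how the coefficient comparison is organized: the paper normalizes signs via the conjugate ($\alpha'<1$ forces all $b_i\ge 0$), extracts the unique cross-term from the irrational part, subtracts it, and lands on $1+D\le 2k<2\sqrt D+2$; you instead bound the rational part (equivalently $\Tr(2\beta)$) by the minimal trace of the square of a non-rational integer, which kills irrational summands without any sign analysis, at the price of the parity argument on the $\omega$-coordinate when $D\equiv 1\pmod 4$. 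Both routes produce the same thresholds ($D<6$, resp.\ $D\le 9$), so the content is equivalent; yours is slightly leaner in the $2,3\pmod 4$ case and slightly heavier in the $1\pmod 4$ case. One step to make explicit: for $D=6$ you only get $2\lceil\sqrt 6\,\rceil=6=D$, so the rational part alone does not exclude a single summand with $a_i=0$, $b_i=\pm1$; you must finish by observing that such a decomposition forces all other summands to vanish and hence the irrational part to be $0\ne 2$. You flag $D=6,7,13$ for direct verification, which covers this, but the argument as written slightly overstates what the inequality $2\lceil\sqrt D\,\rceil\le D$ gives in the equality case.
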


The Pythagoras number of the ring of integers $\co$ in a real quadratic field is at most 5 \cite[Satz 2]{P}, 
so every sum of squares in $\co$ is also a sum of at most five squares. 
In fact, when $D=2,3,5$, the Pythagoras number is 3 \cite[Bemerkung 1]{Sc1}, and so three squares suffice in these cases.

For the proof, we consider the representability of $2\alpha$ for a specific element $\alpha$ (motivated by the study of indecomposable elements \cite{DS,HK}).

It is natural to ask whether for given $m\in \mathbb{N}$ there are only finitely many totally real numbers fields for which every element of $m\co^+$ is a sum of squares in $\co$. At least
for real quadratic fields, we can give the following bounds on $m$:
\begin{thm}\label{cor:2}
	Let $K=\mathbb{Q}(\sqrt{D})$ with $D\geq 2$ squarefree. Let $\kappa=1$ if $D\equiv 1\pmod 4$ and $\kappa=2$ if $D\equiv 2,3\pmod 4$.
\begin{enumerate}[label=\alph*)]
	\item If 
	$m<\frac{\kappa\sqrt D}4$, then \emph{not} all elements of $m\co^+$ are represented as a sum of squares in $\co$.
	\item If $m\geq \frac{D}2$, then all elements of $\kappa m\co^+$ are sums of five squares in~$\co$.
	\item If $m$ is odd and $D\equiv 2,3\pmod 4$, then there exist elements of $m\co^+$ that are not sums of squares in $\co$.
\end{enumerate}
\end{thm}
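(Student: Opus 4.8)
The plan is to handle the two non-representability statements (a) and (c) by producing explicit bad elements and the representability statement (b) by an effective local--global argument; part (c) is the cleanest, so I would dispatch it first. For (c) I would prove the implication: \emph{if every element of $m\co^+$ is a sum of squares in $\co$, then every totally positive element of $\co[\frac1m]$ is a sum of squares in $\co[\frac1m]$.} Given $\xi\in\co[\frac1m]^+$, clear denominators to write $\xi=\eta/m^{2j}$ with $\eta\in\co^+$; by hypothesis $m\eta\in m\co^+$ is a sum of squares, say $m\eta=\sum_i\beta_i^2$, and writing $m=\sum_k c_k^2$ by Lagrange's four--square theorem gives $m^2\eta=\sum_{i,k}(c_k\beta_i)^2$, whence $\eta=\sum_{i,k}(c_k\beta_i/m)^2$ and $\xi=\sum_{i,k}(c_k\beta_i/m^{j+1})^2$ is a sum of squares in $\co[\frac1m]$. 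Since $D\equiv2,3\pmod4$ forces $2$ to ramify, Theorem~\ref{thm:1}(b) (applicable as $m$ is odd) supplies a totally positive element of $\co[\frac1m]$ that is \emph{not} a sum of squares there; the contrapositive of the implication then yields an element of $m\co^+$ that is not a sum of squares in $\co$, proving (c).

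For (a) I would use the archimedean necessary condition that a nonzero sum of squares dominates a nonzero square: if $0\neq\gamma=\sum_i\beta_i^2$ then $\gamma\succeq\beta_{i_0}^2$ coordinatewise in the two real embeddings $\sigma_1,\sigma_2$, for any $\beta_{i_0}\neq0$. Under the Minkowski embedding $\co\hookrightarrow\mathbb R^2$, whose image is a lattice of covolume $\sqrt{\mathrm{disc}}=\kappa\sqrt D$, this says the rectangle $R_\gamma=\{(\xi_1,\xi_2):\xi_1^2\le\sigma_1\gamma,\ \xi_2^2\le\sigma_2\gamma\}$ contains a nonzero lattice point. It therefore suffices to exhibit $\gamma\in m\co^+$ for which $R_\gamma$ contains no nonzero lattice point: taking $\gamma=m\alpha$ with $\alpha\in\co^+$ of very small second conjugate makes $R_\gamma$ a long thin box, and requiring it to avoid the points $\pm1$ and the nonrational lattice points nearest the $\sqrt D$--axis becomes the constraints $\sigma_2\gamma<1$ and $\sigma_1\gamma<(n+\sqrt D)^2$ with $n=\lfloor\sqrt D\rfloor$. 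Optimising the aspect ratio of the empty box (equivalently, choosing the extremal $\alpha$, governed by the continued fraction of $\sqrt D$ and by the indecomposable totally positive elements of \cite{DS,HK}) is the computation yielding the threshold $\frac{\kappa\sqrt D}4$; in the ramified case $D\equiv2,3\pmod4$ I would also invoke the congruence obstruction that a sum of squares in $\mathbb Z[\sqrt D]$ has even $\sqrt D$--coordinate, to cover the small odd multiples. Pinning down the exact constant in this extremal lattice problem is the step I expect to be the main obstacle.

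Finally, for (b) I would run the effective form of the Hsia--Kitaoka--Kneser local--global principle already underlying Theorem~\ref{thm:1} and Corollary~\ref{cor:1}, now tracking constants. Writing $\gamma=\kappa m\alpha$ with $\alpha\in\co^+$, the factor $\kappa$ guarantees (exactly as in Theorem~\ref{thm:1}(a): via the even multiple when $2$ ramifies, and automatically when $2$ is unramified) that $\gamma$ is represented by $x_1^2+\dots+x_5^2$ over every completion. It then remains to meet the archimedean largeness hypothesis of the principle: after normalising $\alpha$ by a totally positive unit so that its two conjugates have bounded ratio (the genuinely small--conjugate elements being unit squares times rational integers, dispatched directly by Lagrange), the bound $m\ge D/2$ pushes all conjugates of $\gamma$ above the effective constant, so $\gamma$ is a sum of five squares in $\co$. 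Making that effective constant explicit as $D/2$ through the concrete genus theory available in a real quadratic field is the remaining bookkeeping.
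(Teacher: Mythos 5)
Your part (c) is correct but takes a genuinely different route from the paper. The paper reuses the witness $\alpha=k+\sqrt D$ (with $k=\lfloor\sqrt D\rfloor+1$) from Theorem \ref{thm:3} and observes that comparing coefficients of $\sqrt D$ in $m\alpha=\sum_i(a_i+b_i\sqrt D)^2$ forces $m=2\sum_i a_ib_i$ to be even. You instead reduce to Theorem \ref{thm:1}(b) via the observation that if all of $m\co^+$ is represented in $\co$, then all of $\co\left[\frac 1m\right]^+$ is represented in $\co\left[\frac 1m\right]$ (using Lagrange to write $m$ as a sum of four rational squares); since $D\equiv 2,3\pmod 4$ forces $2$ to ramify, this gives the contradiction. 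That reduction is valid and pleasantly conceptual (only note that Theorem \ref{thm:1} is stated for $m>1$, so $m=1$ needs a separate word, and your argument produces no explicit witness).

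Parts (a) and (b), however, contain genuine gaps. In (a), the necessary condition you extract --- that a single nonzero square $\beta^2$ satisfies $\beta^2\preceq\gamma$, i.e.\ that the box $R_\gamma$ contains a nonzero lattice point --- is strictly weaker than what is needed, and it fails to detect the obstruction for the natural witness: for $\gamma=m(k+\sqrt D)$ one typically has $\sigma_2\gamma=m(k-\sqrt D)\ge 1$ already for $m=2$, so $\pm1\in R_\gamma$ and the box criterion certifies nothing, even though the element is not a sum of squares. The paper's argument is finer: it constrains the \emph{entire} decomposition at once, showing all $a_i,b_i\ge 0$ and then that the $\sqrt D$-coefficient $2\sum_i a_ib_i=m>0$ forces one summand with $a_j,b_j\ge 1$, hence of size at least $(1+\sqrt D)^2$, which exceeds $m(2\sqrt D+1)$ when $m<\frac{\kappa\sqrt D}4$. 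You explicitly defer ``pinning down the constant'' as the main obstacle, and the single-box formulation does not reach that constant. In (b), the constant $C(K)$ of Hsia--Kitaoka--Kneser is not effective, and making it explicit is not bookkeeping: the paper instead invokes Peters' explicit criterion \cite[Satz 2]{P}, which characterizes sums of five squares in a real quadratic field by the existence of an integer of prescribed parity in an explicit interval whose length is controlled by $\sqrt{N(\alpha)}/D$; the hypothesis $m\ge\frac D2$ is exactly what makes that interval long enough. Without Peters' theorem (or an equivalent explicit computation) your part (b) does not close. Your side remark that elements with a very small conjugate are unit squares times rational integers is also false, though immaterial, since the HKK hypothesis concerns the norm, which multiplication by units does not change.
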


\section{Local-global principle}\label{s:2}

Let $K$ be a totally real number field of degree $d=[K:\mathbb Q]$ with the ring of integers $\mathcal{O}=\co_K$, i.e., $K$ has $d$ real embeddings $\sigma:K\hookrightarrow\mathbb R$.
An element $\alpha\in K$ is \textit{totally positive} ($\alpha\succ 0$) if $\sigma(\alpha)>0$ for all $\sigma$. For $A\subset K$ we will denote by $A^+$ the subset of totally positive elements of $A$.
The norm and trace from $K$ to $\mathbb Q$ are denoted by $N$ and $\Tr$, respectively.

Some of our arguments will be formulated in the language of quadratic lattices:
Let $F$ be a number field or its completion; if $F\neq \mathbb R,\mathbb C$, then let $\co_F$ be its ring of integers.
A \emph{quadratic space over $F$} is an $r$-dimensional $F$-vector space $V$ equipped with a symmetric bilinear form $B: V\times V\to F$ and the associated quadratic form $Q(v)=B(v,v)$.
A \emph{quadratic $\co_F$-lattice} $L\subset V$ is a finitely generated $\co_F$-submodule such that $FL=V$; $L$ is equipped with the restricted quadratic form $Q$, and so we often talk about a quadratic $\co_F$-lattice as the pair $(L,Q)$.

We will only work with the sum of squares quadratic form $Q=x_1^2+\dots+x_r^2$ and its associated 
quadratic $\co_F$-lattice $(\co_F^r,Q)$. This lattice is always \textit{unimodular}, i.e., 
its \textit{dual} $L^{\#}=\{v\in V\mid B(v,w)\in\co_F\text{ for all }w\in L\}$ equals the lattice $L$.

An element $\alpha$ is \textit{represented} by the $\co_F$-lattice $L$ (or by the $F$-quadratic space $V$) if there is 
$v\in L$ (or $v\in V$) such that $Q(v)=\alpha$.

We denote by $\mathfrak{s}L$ the \textit{scale} of a lattice $L$, i.e., $\mathfrak{s}L=\{\sum B(v_i,w_i)\mid v_i,w_i \in L\}.$ Clearly, $\mathfrak{s}L$ is an ideal in $\mathcal{O}_F$. Let us denote by $Q(L)=\{Q(v) \mid v \in L\}$ the set of all elements represented by $Q$.

\medskip

We shall use the partial local-global principle by Hsia, Kitaoka, and Kneser that we formulate only for a sum of squares, as this is the case we need.

\begin{thm}[{\cite[Theorem 3]{HKK}}]\label{HKK}
	There exists a constant $C=C(K)$ such that, for every $\alpha \in \mathcal{O}^+$ with $N(\alpha)\geq C$, we have that $\alpha$ is represented as a sum of five squares in $\mathcal{O}$ if  and only if $\alpha$ is represented everywhere locally as a sum of five squares.
\end{thm}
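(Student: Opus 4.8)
The plan is to deduce the statement from the classical theory of genera and spinor genera of quadratic lattices, using the large-norm hypothesis to compensate for the fact that the sum-of-squares form is totally definite. Write $L=(\co^5,\,x_1^2+\dots+x_5^2)$. The implication from a global representation to local representability at every place is immediate, since reducing $\alpha=\beta_1^2+\dots+\beta_5^2$ in each completion yields a local representation. Thus the whole content lies in the converse, which I would split into three steps, the first two formal and the last analytic.

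First, I would reduce local representability to representability by the genus of $L$: an element $\alpha\in\co^+$ is represented as a sum of five squares by \emph{some} lattice in the genus of $L$ if and only if it is so represented in every completion. This is the standard local--global principle for the genus (one represents $\alpha$ over the quadratic space $K^5$ by Hasse--Minkowski, and the genus records exactly the integral local representations), with total positivity handling the archimedean places. Under the hypotheses, $\alpha$ is therefore represented by the genus. Second, I would pass from the genus to a single spinor genus. Fixing a vector $v$ with $Q(v)=\alpha$, its orthogonal complement $W=v^{\perp}$ is a quadratic space of rank $4=5-1$. Because $5\geq 2\cdot 1+3$, this complement has rank $\geq 3$, which is exactly the condition under which the theory of spinor genera (Kneser's analysis of the action of $\mathrm{Spin}(W)$ on the genus) shows that every spinor genus in the genus of $L$ represents $\alpha$ whenever the full genus does, apart from a controlled set of spinor-exceptional values that the size hypothesis will also exclude.

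The remaining and hardest step is to pass from a spinor genus to the class of $L$ itself. Were the form indefinite at some archimedean place, Eichler's strong approximation would force spinor genus $=$ class and we would conclude with no size restriction; but our form is totally positive definite, so $\mathrm{Spin}(W)$ is compact at every archimedean place, strong approximation fails, and distinct classes in a spinor genus may represent different elements. This is precisely where the hypothesis $N(\alpha)\geq C$ enters. To handle it I would use the quantitative theory of representation numbers: by the Siegel--Weil mass formula, the weighted average number of representations of $\alpha$ across a spinor genus equals a mass factor times a positive product of local densities, a main term that grows like a fixed positive power of $N(\alpha)$, while the deviation of the representation number of the individual lattice $L$ from this Eisenstein main term is controlled by Fourier coefficients of Hilbert modular cusp forms of (half-integral) parallel weight $5/2$, which the subconvexity/Ramanujan-type estimates available in this weight bound by a strictly smaller power of $N(\alpha)$ precisely because the rank is at least five.

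I expect this final analytic comparison to be the main obstacle: all the earlier reductions are formal consequences of the genus and spinor-genus machinery, but the archimedean compactness rules out a purely algebraic strong-approximation argument, so the effective constant $C=C(K)$ emerges exactly from dominating the cusp-form error by the Eisenstein main term. It is the choice of five squares (the threshold $5\geq 2\cdot 1+3$, equivalently a complement of rank $\geq 4$) that makes this domination succeed, after which the representation number of $L$ is positive and $\alpha$ is a sum of five squares in $\co$.
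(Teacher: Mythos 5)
A point of order first: the paper does not prove this statement at all --- it is quoted directly from Hsia--Kitaoka--Kneser \cite[Theorem 3]{HKK}, specialized to the sum of five squares, so the only ``paper's own proof'' is a citation. Your proposal is therefore an attempt to reprove an imported theorem, and it follows a genuinely different route from the one in the cited source. Your first two steps are standard and essentially correct: local representability everywhere is equivalent to representability by some lattice in the genus of $(\co^5,\sum x_i^2)$, and for a lattice of rank $5$ representation of a number by the genus implies representation by every spinor genus. (One small inaccuracy: spinor-exceptional values form entire square classes and hence contain elements of arbitrarily large norm, so a size hypothesis could never ``exclude'' them; what saves you is that for rank $\geq 5$ there are no spinor exceptions at all, so the caveat is simply unnecessary.) The real divergence is in the last step. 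You propose the analytic route --- Siegel--Weil main term versus the cuspidal error of a half-integral-weight Hilbert theta series. That strategy is viable for quinary forms (in weight $5/2$ even Hecke-type bounds on cuspidal coefficients lose to the Eisenstein term, provided one also establishes a lower bound of the shape $N(\alpha)^{-\epsilon}$ for the product of local densities of a locally represented $\alpha$, which does hold for unimodular lattices of rank $\geq 5$), but over a general totally real field this is a substantial programme, and your write-up explicitly leaves the decisive comparison, and hence the existence and $K$-dependence of the constant $C$, unproved. As it stands it is a plan, not a proof.

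By contrast, the argument of Hsia, Kitaoka and Kneser is purely arithmetic and uses no modular forms: starting from a representation of $\alpha$ by some lattice $M$ in the genus of $L$, they pass from $M$ to $L$ along a chain of neighbouring lattices at a fixed non-archimedean prime and show that, once the minimum (here the norm) of the represented element is large enough, the representation can be transported across each step; the rank threshold $n\geq m+3$ (here $5\geq 1+3$) enters through strong approximation for the spin group of the orthogonal complement applied at that finite place, which is exactly how they sidestep the failure of strong approximation caused by definiteness at the archimedean places. Their method works uniformly for representing lattices of any rank $m$ with $n\geq m+3$, where the analytic approach degrades badly; the analytic route, when it can be completed, yields stronger quantitative information (asymptotics for the number of representations). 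So: different route, sound in outline for the quinary case, but with its hardest step only gestured at rather than carried out.
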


To be precise, we say that $\alpha$ is \textit{represented everywhere locally} if it is represented over all the archimedean completions of $K$ and over the rings of integers $\co_F$ in all the non-archimedean completions $F$.
Similarly, when we, e.g., say \textit{over all the completions at dyadic primes}, we mean over the rings of integers $\co_F$ in all the completions $F$ of $K$ at prime ideals $\mathfrak p\mid 2\co$.

\medskip

The next Lemma is a known result (see for example \cite[Remark 1]{BDS}), but we could not find 
its proof in the literature, so we reprove it for the benefit of the reader.

\begin{lemma}\label{lemma-1}
	Let $\alpha\in \mathcal{O}$ and $r\in\mathbb N, r\geq 5$. Then $\alpha$ is a sum of $r$ squares over all the completions at dyadic primes if and only if $\alpha$ is a square modulo $2\mathcal{O}$.
\end{lemma}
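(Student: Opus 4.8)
The plan is to reduce the statement to a purely local question at each dyadic prime and then treat the two implications separately; all the genuine difficulty will sit at the ramified dyadic primes.

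First I would localize both sides. By definition, ``$\alpha$ is a sum of $r$ squares over all the completions at dyadic primes'' means that for every $\mathfrak p\mid 2\co$, the element $\alpha$ is a sum of $r$ squares in $\co_F$ with $F=K_{\mathfrak p}$. For the other condition, write $2\co=\prod_{\mathfrak p\mid 2}\mathfrak p^{e_{\mathfrak p}}$ with $e_{\mathfrak p}=v_{\mathfrak p}(2)$; the Chinese Remainder Theorem gives $\co/2\co\cong\prod_{\mathfrak p\mid 2}\co/\mathfrak p^{e_{\mathfrak p}}\cong\prod_{\mathfrak p\mid 2}\co_F/2\co_F$, and an element of a finite product of rings is a square iff it is a square in each factor. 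Hence $\alpha$ is a square modulo $2\co$ iff it is a square modulo $2\co_F$ in every dyadic completion $F$. So the Lemma follows once I prove, for each dyadic local field $F$ and each $\alpha\in\co_F$, that $\alpha$ is a sum of $r\geq 5$ squares in $\co_F$ iff $\alpha$ is a square modulo $2\co_F$.

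The necessity is immediate and works for every $r$: if $\alpha=\sum_{i=1}^r x_i^2$ with $x_i\in\co_F$, then from $(\sum_i x_i)^2=\sum_i x_i^2+2\sum_{i<j}x_ix_j$ we get $\alpha\equiv(\sum_i x_i)^2\pmod{2\co_F}$, so $\alpha$ is a square modulo $2\co_F$. For sufficiency, fix $s\in\co_F$ with $\alpha\equiv s^2\pmod{2\co_F}$ and write $\alpha=s^2+2\beta$ with $\beta\in\co_F$. Using one of the $r\geq 5$ squares for $s^2$ and padding the rest with zeros, it suffices to show that four squares $x_1^2+x_2^2+x_3^2+x_4^2$ represent every element of $2\co_F$ (this is exactly where the hypothesis $r\geq 5$ enters). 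I would establish this by Hensel's lemma: if $e=v_{\mathfrak p}(2)$, then the gradient of $x_1^2+\dots+x_4^2$ at a point with $x_1$ a unit has a coordinate of valuation $e$, so any solution of $x_1^2+\dots+x_4^2\equiv 2\beta\pmod{\mathfrak p^{2e+1}}$ with some $x_i$ a unit lifts to an exact representation (lift the unit coordinate by solving a single $x_1^2=\tau$ with $\tau$ a unit that is a square modulo $\mathfrak p^{2e+1}$). Everything thus reduces to the congruence claim: for every $c\in 2\co_F$ the equation $x_1^2+\dots+x_4^2\equiv c\pmod{\mathfrak p^{2e+1}}$ is solvable with a unit coordinate, a finite computation in $\co_F/\mathfrak p^{2e+1}$. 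Alternatively, one can read this off O'Meara's classification of unimodular lattices over dyadic fields, the odd unimodular quaternary lattice of four squares having the expected norm group $2\co_F$.

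I expect the genuinely dyadic representation fact --- that four squares represent all of $2\co_F$, equivalently the covering of $2\co_F$ modulo $\mathfrak p^{2e+1}$ --- to be the main obstacle. At a ramified dyadic prime the naive valuation count fails, since a sum of squares can acquire odd valuation only through cancellation of leading terms (for instance $2\sqrt2=(1+\sqrt2)^2+(\sqrt{-7})^2+2^2$ over $\mathbb Q_2(\sqrt2)$, using that $-7$ is a square there), and the familiar mod-$8$ argument available over $\mathbb Q_2$ must be replaced by a careful analysis modulo $\mathfrak p^{2e+1}$; the archimedean and non-dyadic places play no role.
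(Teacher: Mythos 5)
Your skeleton agrees with the paper's: reduce to each dyadic completion $F$ (via CRT, using $\co/2\co\cong\prod_{\mathfrak p\mid 2}\co_F/2\co_F$), get necessity from $\sum x_i^2\equiv(\sum x_i)^2\pmod{2\co_F}$, and for sufficiency write $\alpha=s^2+2\beta$. The gap is in how you finish the sufficiency. You reduce it to the claim that the \emph{quaternary} form $x_1^2+\dots+x_4^2$ represents every element of $2\co_F$ over every dyadic local field, and you defer that claim to ``a finite computation in $\co_F/\mathfrak p^{2e+1}$'' (or, alternatively, to reading it off O'Meara's classification). Neither is carried out, and the deferral does not actually close the argument: the lemma quantifies over all dyadic completions of all totally real number fields, so the residue rings $\co_F/\mathfrak p^{2e+1}$ form an infinite family of unbounded ramification and residue degree, and no single finite computation covers them. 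You yourself flag this as ``the main obstacle,'' which is an accurate self-assessment --- the entire dyadic content of the lemma sits in that unproved step. (Your Hensel/Local Square Theorem lifting from an approximate solution modulo $\mathfrak p^{2e+1}=4\mathfrak p$ with a unit coordinate is fine; it is the existence of the approximate solution that is missing.)

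The paper avoids the quaternary claim altogether, and this is worth internalizing because your reduction is actually \emph{stronger} than what is needed. It takes $L=(\co_F^r,Q)$ with $r\ge 5$, notes that $L$ is unimodular with $\mathfrak sL=\co_F$ (since $Q$ represents $1$), and invokes O'Meara 93:18(v), which for dyadic primes and unimodular lattices of rank at least $5$ gives $Q(L)=Q(L)+2\,\mathfrak sL$. Then $\alpha=s^2+2\beta\in Q(L)+2\,\mathfrak sL=Q(L)$ immediately: the point is that $s^2$ is treated as an element of $Q(L)$ for the \emph{full} rank-$r$ lattice and the perturbation $2\beta$ is absorbed by the stability of $Q(L)$ under adding $2\,\mathfrak sL$, rather than demanding that four of the squares separately account for $2\beta$. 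This also explains the hypothesis $r\ge 5$: the rank condition in 93:18(v) is exactly $\ge 5$, so it cannot be applied to your quaternary sublattice. If you want to keep your structure, you must either prove the quaternary representation statement uniformly over all dyadic fields (nontrivial, and not obviously true at heavily ramified primes) or restructure as above and cite the rank-$\ge 5$ result for the whole lattice.
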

\begin{proof}
	
	Let $2\mathcal{O}=\prod^k_{i=1}\mathfrak{p}_i^{e_i}$ where $\mathfrak p_i$ are pairwise distinct prime ideals.
	We have $x_1^2+\dots+x_r^2\equiv (x_1+\dots+x_r)^2\pmod{\mathfrak{p}_i^{e_i}}$, i.e., any sum of squares is congruent to a square modulo $\mathfrak{p}_i^{e_i}\mid 2\co$. 
	Thus, if $\alpha$ is represented as a sum of squares over the completion at $\mathfrak p_i$, then $\alpha$ is congruent to a square modulo $\mathfrak{p}_i^{e_i}$. 
	The Chinese Remainder Theorem then implies that $\alpha$ is congruent to a square also modulo $2\co$, as needed.
	
	\medskip
	
	Now assume that $\alpha\equiv \beta^2 \pmod{2\mathcal{O}}$, i.e.,  $\alpha=\beta^2+2\delta$ for some $\delta \in \mathcal{O}$.
	Consider a prime ideal $\mathfrak p\mid 2\co$ and let $F$ be the corresponding completion of $K$.
	Let $L$ be the $\co_F$-lattice associated to the quadratic form $Q(x)=x^2_1+\ldots + x^2_r$.  
	This sum of squares lattice is unimodular and the rank of $L$ is $r\ge 5$, and so by \cite[93:18(v)]{O} (that uses the assumption that we are working dyadically), we deduce that $Q(L)=Q(L)+2(\mathfrak{s}L)$. At the same time,
	as $Q$ represents 1, we have $\mathfrak{s}L=\mathcal{O}$, and so $\delta \in \mathfrak{s}L$ and $\alpha=\beta^2+2\delta\in Q(L)+2(\mathfrak{s}L)=Q(L)$, as needed.
\end{proof}


We can now prove our first Theorem and Corollary.

\begin{customthm}{1}
	Let $m\in \mathbb{N}$, $m>1$. 
	\begin{enumerate}[label=\alph*)]
		\item If $m$ is even or $2$ is unramified in $\mathcal{O}$, then every totally positive element of
		$\mathcal{O}\left[\frac 1m\right]$ is a sum of five squares in $\mathcal{O}\left[\frac 1m\right]$.
		\item If $m$ is odd and $2$ ramifies in $\mathcal{O}$, then there exist totally positive elements in
		$\mathcal{O}\left[\frac 1m\right]$ that are not sums of squares in $\mathcal{O}\left[\frac 1m\right]$.
	\end{enumerate}
\end{customthm}

\begin{proof}
	
a) 
Let us consider a totally positive element in $\mathcal{O}\left[\frac 1m\right]$ and write it as $\frac{\alpha}{m^k}$, where $\alpha\in \mathcal{O}$ and $k\in\mathbb{Z}$. By multiplying the numerator and denominator of $\frac{\alpha}{m^k}$ by a sufficiently large power of $m$, we can without loss of generality assume that $\alpha$ satisfies the norm assumption in Theorem \ref{HKK} and that $k$ is even. Thus to apply this Theorem, we need to show that $\alpha$ is represented everywhere locally as a sum of five squares. This is clear over all archimedean places, as $\alpha$ is totally positive.
	
Over every non-dyadic prime, every unimodular lattice of rank $\ge 3$ is universal \cite[92:1b]{O}, and thus $\alpha$ is a sum of $3$ squares (in the considered non-dyadic completion of $K$), and \textit{a fortiori} of $5$ squares. 

For a dyadic prime $\mathfrak{p}\mid 2\mathcal{O}$, by Lemma \ref{lemma-1}, $\alpha$ is a sum of five squares if and only if $\alpha$ is a square modulo $2\mathcal{O}$. If $m$ is even, then we can without loss of generality replace $\frac{\alpha}{m^k}$ by $\frac{m^2\alpha}{m^{k+2}}$ so that $\alpha\equiv 0\equiv 0^2 \pmod{2\mathcal{O}}$.
If $2$ is unramified in $\mathcal{O}$, then $\mathcal{O}/2\mathcal{O}$ is a product of fields of characteristic 2, and hence every element of $\co$ is a square modulo $2\co$.

\medskip

b)
Assume that $m$ is odd and $2$ ramifies, i.e.,  $2\mathcal{O}=\prod^k_{i=1}\mathfrak{p}_i^{e_i}$, where $\mathfrak p_i$ are pairwise distinct primes and at least one $e_i>1$; assume that $e_1>1$. 
Choose an element  $\alpha_0\in\mathfrak{p}_1\setminus \mathfrak{p}_1^2$ and add to it a sufficiently large even rational integer $2a$ so that the sum $\alpha=\alpha_0+2a\in\mathfrak{p}_1\setminus \mathfrak{p}_1^2$ is totally positive;
assume for contradiction that $\alpha$ is a sum of squares in $\mathcal{O}\left[\frac 1m\right]$.

As $m$ is odd, it is invertible modulo $\mathfrak p_1^{e_1}$, and so $\alpha$ is congruent to a sum of squares in $\co/\mathfrak p_1^{e_1}$. As in the first line of the proof of Lemma \ref{lemma-1}, this implies that
$\alpha\equiv \beta^2\pmod{\mathfrak{p}_1^{e_1}}$ for some $\beta\in\co$.
We have $\alpha \in \mathfrak{p}_1$, and so $\beta^2 \in \mathfrak{p}_1$. Therefore $\beta\in\mathfrak{p}_1$ (for $\mathfrak{p}_1$ is a prime ideal), and finally $\alpha\in \mathfrak{p}_1^2$, a contradiction. 
\end{proof}

\begin{customcor}{2}
	There exists $m_0\in \mathbb{N}$ (depending on $K$) such that for all $m\geq m_0$,
	every element in $2m\co^+$ is a sum of five squares of elements of $\co$.
\end{customcor}

\begin{proof}
Let $2m\geq C^{1/d}$ where $C$ is the constant from Theorem \ref{HKK} and consider $\alpha\in\co^+$. We need to show that $2m\alpha$ can be represented as a sum of five squares in $\co$.

We have $N(2m\alpha)\geq (2m)^{d}\geq C$, and so the norm condition from Theorem \ref{HKK} is satisfied for $2m\alpha$.

Further, $2\mid 2m\alpha$, and so the element $2m\alpha$ is represented everywhere locally as we saw in the proof of Theorem~\ref{thm:1}. Thus $2m\alpha$ is a sum of five squares by Theorem \ref{HKK}.
\end{proof}


In the case when $m$ is odd and $2$ ramifies, one can argue analogously using Theorem \ref{HKK} (and the observation that a sum of squares always satisfies the local conditions at dyadic primes) to show that the Pythagoras number of $\co\left[\frac 1m\right]$ is at most 5.

\section{Quadratic case}\label{s:3}

To prove  Theorem \ref{thm:3}, we will need to work with specific elements of $K$. Throughout this Section, we  use the following notation. We fix a squarefree integer $D\geq2$ and
consider the real quadratic field $K=\mathbb Q(\sqrt D)$ and its ring of integers $\co$.
Then $\{1,\omega_D\}$ forms an integral basis of $\co$, where
\[
\omega_D= \begin{cases}
\sqrt D & \text{if }\textstyle D\equiv 2,3\pmod 4
,\\
\frac{1+\sqrt D}{2} & \text{if }D\equiv 1\pmod 4
.\end{cases}
\]
The conjugate of $\alpha\in K$ is $\alpha'$.

\begin{customthm}{3}
	Let $K=\mathbb{Q}(\sqrt{D})$ with $D\geq 2$ squarefree.
	Then every element of $2\mathcal{O}^+$ is a sum of squares in $\co$ if and only if $D=2,3$, or $5$.		
\end{customthm}

\begin{proof}
	
	``$\Rightarrow$'' 	
	Assume that every element of $2\co^+$ is a sum of squares.
	
	Let us start with the case $D\equiv 2,3\pmod 4$.
	
	For the element
	$\alpha=k+\sqrt D\in\co^+$ with $k=\lfloor \sqrt D\rfloor +1$, let
	$2\alpha=\sum_i\alpha_i^2$ be the corresponding decomposition with
	$\alpha_i=a_i+b_i\sqrt D$, where we can without loss of generality assume that $a_i\geq 0$.
	We have	$$\alpha_i^2= a_i^2+Db_i^2+2a_ib_i\sqrt D.$$
	
	If $a_i=0$, then let us also without loss of generality assume that $b_i>0$.
	
	If for some $i$ we have $a_i>0$ and $b_i<0$, then $2\alpha'\geq (\alpha_i')^2= a_i^2+Db_i^2+2a_i(-b_i)\sqrt D\geq 1+D+2\sqrt D>2$, a contradiction, as $\alpha'<1$.
	Thus all $b_i\geq 0$.
	
	Comparing irrational parts in the expression for $2\alpha$, we see that $1 =\sum_i a_ib_i$. 
	All the summands are non-negative, and so there is exactly one index $j$ with $a_jb_j=1$ and $a_ib_i=0$ for all $i\neq j$.
	Without loss of generality let $j=1$ so that $a_1=b_1=1$.
	
	Subtracting $\alpha_1^2$ from the representation of $2\alpha$ we then get $2k-(1+D)=\sum_{i\geq 2}\alpha_i^2\geq 0$, and so
	\[2\sqrt D+2>2k\geq 1+D.\]
	This is possible only for $D<6$ (i.e., $D=2,3$)  finishing this part of the proof.
	
	\medskip
	
	Let us now take $D\equiv 1\pmod 4$ and 
	$\alpha=k+\omega_D=(k+1/2)+\sqrt D/2\in\co^+$ with $k=\lfloor \omega_D\rfloor$. Again, let 
		$2\alpha=\sum_i\alpha_i^2$ with
	$\alpha_i=a_i+b_i\sqrt D$ and $a_i\geq 0$, except this time $a_i=c_i/2, b_i=d_i/2$ can be half-integers (with $2\mid c_i-d_i$ for all $i$). 
	Nevertheless, as before we have $\alpha'<1$, from which we deduce that  $b_i\geq 0$ for all $i$.
	
	Comparing coefficients we then have $1/2=\sum_i a_ib_i$, and so (without loss of generality) the only possibility is $\alpha_1=\alpha_2=\omega_D$ and $a_ib_i=0$ for all $i>2$. Then $2k+1/2-D/2=2\alpha-2\omega_D^2=\sum_{i\geq 3}\alpha_i^2\geq 0$. Thus
	\[2\sqrt D+3\geq D,\]
	which is not possible for $D>9$. Thus $D=5$ in this case. 
		
	\medskip
	
	``$\Leftarrow$'' When $D=5$, Maa\ss\ \cite{Ma} showed that every element of $\co^+$ is a sum of three squares.
	
	When $D=2,3$, Scharlau \cite{Sc1} in fact proved that each element that is congruent to a square modulo $2\co$ is a sum of squares. In particular, all elements of $2\co^+$ are sums of squares.
\end{proof}

Although we did not need this explicitly in the proof, its idea is based on the study of indecomposable integers.
We say that $\alpha\in\co^+$ is \emph{indecomposable} if it can not be written
as a sum of two totally positive integers. 
These elements can be explicitly characterized in terms of the periodic continued fraction for $\omega_D$ \cite{DS};
our proof was motivated by the study of the possible decompositions of $2\alpha$ \cite{HK}.

Note that it should be possible to use a similar method based on explicitly studying representations of multiples of indecomposables to determine all the real quadratic fields where all elements of $3\co^+$ (or $4\co^+$ or $m\co^+$ for other fixed $m$) are sums of squares. However, the technical difficulty quickly increases with $m$. Nevertheless we at least get

\begin{customthm}{4}
	Let $K=\mathbb{Q}(\sqrt{D})$ with $D\geq 2$ squarefree. Let $\kappa=1$ if $D\equiv 1\pmod 4$ and $\kappa=2$ if $D\equiv 2,3\pmod 4$.
	\begin{enumerate}[label=\alph*)]
		\item If 
		$m<\frac{\kappa\sqrt D}4$, then \emph{not} all elements of $m\co^+$ are represented as a sum of squares in $\co$.
		\item If $m\geq \frac{D}2$, then all elements of $\kappa m\co^+$ are sums of five squares in~$\co$.
		\item If $m$ is odd and $D\equiv 2,3\pmod 4$, then there exist elements of $m\co^+$ that are not sums of squares in $\co$.
	\end{enumerate}
\end{customthm}

\begin{proof}
a)	The argument is similar to the proof of Theorem \ref{thm:3}, so let us skip some details. 

If $D\equiv 2,3\pmod 4$, let $\alpha=k+\sqrt D$ (with $k=\lfloor \sqrt D\rfloor +1$)
be the same element as in the proof of Theorem \ref{thm:3} and let $m\alpha=\sum_i (a_i+b_i\sqrt D)^2$ with $a_i,b_i\in\mathbb Z$. As before we obtain that $a_i\geq 0, b_i\geq 0$ for all $i$ (using the upper bound on $m$).

Comparing irrational parts in the expression for $m\alpha$ we see that $m=2\sum_i a_ib_i$, and so for some $j$ we have $a_j,b_j\geq 1$. Then 
$m(2\sqrt D+1)> m\alpha\geq(a_j+b_j\sqrt D)^2\geq (1+\sqrt D)^2$, which gives a contradiction with $m<\frac{\sqrt D}{2}$.

If $D\equiv 1\pmod 4$, we consider the element $\alpha=k+\omega_D$ with $k=\lfloor \omega_D\rfloor$. If $m\alpha=\sum_i (a_i+b_i\sqrt D)^2$ with $a_i,b_i\in\mathbb Z/2$, then again $a_i\geq 0, b_i\geq 0$ for all $i$, and so $a_j,b_j\geq 1/2$ for some $j$. 
This finally implies  $m(\sqrt D+1)> m\alpha\geq(a_j+b_j\sqrt D)^2\geq(1/2+\sqrt D/2)^2$, which is again impossible.

\medskip

b)	The proof will use a result of Peters \cite[Satz 2]{P}. To state it, assume first that $D\equiv 1 \pmod{4}$. Given $\alpha \in \mathcal{O}^+$ of the form $\alpha=a_0+a_1\omega_D$, Peters' theorem states that $\alpha$ is a sum of five squares if and only if there exists a rational integer $n$ in the interval 
	\[\left[\dfrac{1}{D}\left(2a_0+a_1-2\sqrt{N(\alpha)}\right),\dfrac{1}{D}\left(2a_0+a_1+2\sqrt{N(\alpha)}\right)\right]\] 
	such that $n\equiv a_1 \pmod{2}$. 
	
	If the length of this closed interval is at least 2, then it contains at least two consecutive integers, one of which will satisfy the requirement modulo~2. Therefore, if 
	\[2\sqrt{N(\alpha)}\ge D,\]
	then $\alpha$ is represented as a sum of five squares. 
	
	Every element $\alpha=m\beta$ with $\beta\in\co^+$ and
	$m\geq D/2$ satisfies the norm inequality above.
	
	\medskip
	
	In the case of $D\equiv 2,3\pmod 4$, let $\alpha=a_0+2a_1\sqrt{D}$. Then by the same result of Peters, $\alpha$ is a sum of five squares if there exists a rational integer $n$ in 
	\[\left[\dfrac{1}{2D}\left(a_0-\sqrt{N(\alpha)}\right), \dfrac{1}{2D}\left(a_0+\sqrt{N(\alpha)}\right) \right].\] 
	The length of this interval is at least 1 when $\alpha=2m\beta$ with $\beta\in\co^+$ and
	$m\geq \frac D2$, and the coefficient of $\alpha=2m\beta$ at $\sqrt D$ is obviously even as required.

\medskip
	
c) Let $D\equiv 2,3\pmod 4$ and consider the element $\alpha=k+\sqrt D$ from part a). If $m\alpha$ is a sum of squares as in part a), then $m=2\sum_i a_ib_i$ is even.
\end{proof}

Obviously, the argument in part a) and the bound on $m$ can be somewhat improved by considering also the other summands $(a_i+b_i\sqrt D)^2$.

At least at present, this method is only limited to the quadratic case, as in higher degrees we lack the necessary understanding of indecomposables.

\medskip

Note that when $D\equiv 1 \pmod{4}$, then Peters' result immediately implies that every element $\alpha\in\co^+$ of norm $>D/2$ is represented as a sum of five squares. E.g., when $D=5$, then all totally positive units are squares and the smallest norm of a non-unit is $3>5/2$, and so this implies that all elements of $\co^+$ are sums of squares.

\section*{Acknowledgments}
We thank the anonymous referee for a very careful and fast reading of the manuscript, and for a number of detailed comments that helped to fix some issues and to improve the readability of the article. We also thank Martin Ra\v ska for pointing out an error in Theorem 4b).

\end{document}